\newtheorem{theorem}{Theorem}[section]
\newtheorem{corollary}[theorem]{Corollary}
\newtheorem{proposition}[theorem]{Proposition}
\theoremstyle{definition}
\newtheorem{definition}[theorem]{Definition}
\newtheorem{remark}[theorem]{Remark}
\theoremstyle{parrafo}
\begin{document}

\title[]{A characterization of the uniform strong type $(1,1)$ bounds for averaging operators}

\author{J. M. Aldaz}
\address{Instituto de Ciencias Matem\'aticas (CSIC-UAM-UC3M-UCM) and Departamento de 
Mate\-m\'aticas,
Universidad  Aut\'onoma de Madrid, Cantoblanco 28049, Madrid, Spain.}
\email{jesus.munarriz@uam.es}
\email{jesus.munarriz@icmat.es}

\thanks{2010 {\em Mathematical Subject Classification.} 41A35}

\thanks{The author was partially supported by Grant MTM2015-65792-P of the
MINECO of Spain, and also by by ICMAT Severo Ochoa project SEV-2015-0554 (MINECO)}







\begin{abstract} We prove that in a  metric measure space $(X, d, \mu)$, the averaging operators $A_{r, \mu }$ satisfy a  uniform strong type $(1,1)$ bound 
$
\sup_{r, \mu}\|A_{r, \mu }\|_{L^1\to L^{1}} < \infty$ if and only if  $X$ satisfies
a certain geometric condition, 
the equal radius Besicovitch  intersection property.  
\end{abstract}


\maketitle


\markboth{J. M. Aldaz}{Bounds for averaging operators}

\section {Introduction} 
Motivated by a question of Prof. Przemys\l{}aw G\'orka (personal communication) we show that averaging operators are of strong type (1,1) for arbitrary, locally finite $\tau$-additive Borel measures $\mu$ on a metric space $X$, with bounds   independent of 
 $\mu$ and of $r$, if and only if $X$ has a certain property of Besicovitch type, called here the equal radius Besicovitch  intersection property, cf. Definition \ref{BIP} for the precise statement.

 This characterization,   obtained via minor modifications of the arguments from \cite{Al1} and \cite{Al2}, is entirely analogous to the one presented in \cite{Al1}
for the centered maximal operator, which uses the Besicovitch intersection property, a stronger condition.
Thus, we conclude that uniform weak type $(1,1)$ bounds for the centered maximal operator are stronger than  uniform strong type $(1,1)$ bounds for the averaging operators. Since for Banach spaces the equal radius Besicovitch  intersection property is equivalent to the Besicovitch  intersection property, we obtain several sharp bounds on $\mathbb{R}^d$ for $
\sup_{r, \mu}\|A_{r, \mu }\|_{L^1\to L^{1}}$, by direct transference from the maximal function case. This allows us to improve previously known upper bounds for the standard gaussian measures in euclidean spaces,  cf. \cite{Al3}.

\section {Definitions and results} 

We will use $B^{o}(x,r) := \{y\in X: d(x,y) < r\}$ to denote metrically open balls, 
 and 
$B^{cl}(x,r) := \{y\in X: d(x,y) \le r\}$ to refer to metrically closed balls;
open and closed will
always be understood  in the metric (not the topological) sense. 
 If we do not want to specify whether balls are open or  closed,
we write $B(x,r)$. But when we utilize $B(x,r)$,  all balls are taken to be of the same kind, i.e., all open or all closed. Also, whenever we speak of balls,
we assume that suitable centers and radii have  been chosen. 

\begin{definition} Let $(X, d)$ be a metric space. A Borel measure $\mu$ 
	on $X$ is   {\em $\tau$-additive} or {\em $\tau$-smooth}, if for every
	collection  $\{O_\alpha : \alpha \in \Lambda\}$
	of  open sets, we have
	$$
	\mu (\cup_\alpha O_\alpha) = \sup_{\mathcal{F}} \mu(\cup_{i=1}^n O_{\alpha_i}),
	$$
	where the supremum is taken over all finite subcollections $\mathcal{F} = \{O_{\alpha_1}, \dots, O_{\alpha_n} \}$
	of  $\{O_\alpha : \alpha \in \Lambda\}$.
	If $\mu$  assigns finite measure
	to bounded Borel sets, we say it is {\em locally finite}.
	Finally, we call $(X, d, \mu)$  a {\em metric measure space} if
	$\mu$ is a  $\tau$-additive, locally finite  Borel measure on the metric space $(X, d)$. 
	\end{definition}

The preceding definition includes all locally finite Borel measures on 
separable metric spaces and all Radon measures on arbitrary metric spaces. From now on we always suppose that measures are locally finite, 
not identically zero, and  that metric spaces
have at least two points.

Recall that 
the complement of
the support $(\operatorname{supp}\mu)^c := \cup \{ B^{o}(x, r): x \in X, \mu B^{o}(x,r) = 0\}$
of a Borel  measure
$\mu$,  is an open set, and hence measurable. 

\begin{definition}\label{full} Let $(X, d)$ be a metric space and let
	$\mu$ be a locally finite Borel measure on $X$. 
	If $\mu (X \setminus \operatorname{supp}\mu) = 0$, 
	we say that $\mu$ has {\em full support}. 
\end{definition}

By $\tau$-additivity,  if $(X, d, \mu)$ is  a metric measure space, then 
$\mu$ has full support,
since $X \setminus \operatorname{supp}\mu $ is a union of open balls of measure zero.

\begin{definition}\label{maxfun} Let $(X, d, \mu)$ be a metric measure space and let $g$ be  a locally integrable function 
	on $X$. For each fixed $r > 0$ and each $x\in \operatorname{supp}\mu$, the
	averaging operator $A_{r, \mu}$ is defined as
	\begin{equation}\label{avop}
	A_{r , \mu} g(x) := \frac{1}{\mu
		(B(x, r))} \int _{B(x, r)}  g \ d\mu.
	\end{equation}
\end{definition}

Averaging operators in metric measure spaces are defined almost everywhere,   by
$\tau$-additivity. Sometimes it is convenient to  specify whether balls are open or closed; in that case,
we use $A_{r , \mu}^{o} $ and $A_{r , \mu}^{cl} $ for the corresponding operators. 
Furthermore, when we are considering only one measure $\mu$ we often omit it, 
writing $A_{r } $ instead of the longer $A_{r , \mu} $.

Recall that given $p$ with $1 \le p < \infty$,
$A_{r,\mu}$ satisfies a
weak type $(p,p)$ inequality if there exists a constant $c > 0$ such that
\begin{equation}\label{weaktypep}
\mu (\{A_{r,\mu} g \ge \alpha\}) \le \left(\frac{c \|g\|_{L^p(\mu)}}{\alpha}\right)^p,
\end{equation}
where $c=c(p,  \mu)$ depends neither on $g\in L^p (\mu)$
nor on $\alpha > 0$. The lowest constant $c$ that satisfies the preceding
inequality is denoted by $\|A_{r,\mu}\|_{L^p\to L^{p, \infty}}$.
Likewise, if there exists a constant $c > 0$ such that
\begin{equation}\label{strongtypep}
\|A_{r,\mu} g \|_{L^p(\mu)}  \le c \|g\|_{L^p(\mu)},
\end{equation}
we say that $A_{r,\mu}$ satisfies a
strong type $(p,p)$ inequality.  The lowest such constant (the operator norm) 
is denoted by $\|A_{r,\mu}\|_{L^p\to L^{p}}$.

\begin{definition} \label{conjugate} We call 
	\begin{equation}\label{conjug}
	a_r (y)  
	: =
	\int_{\operatorname{supp(\mu)}}  \frac{\mathbf{1}_{B(y,r)}(x)}{\mu B(x,r)}  \  d\mu(x) 
	\end{equation}
	the {\em conjugate function} to the averaging operator $A_{r}$.
\end{definition}

Note that the conjugate function $a_r$ is well defined a.e., whenever $y$ belongs to
the support of $\mu$. According to \cite[Theorem 3.3]{Al2}, 
$A_r$ is bounded on
$L^1(\mu)$ if and only if $a_r \in L^\infty (\mu)$, in which case 
$\|A_r \|_{L^1(\mu)\to L^1(\mu)} = \|a_r\|_{L^\infty(\mu)\to L^\infty (\mu)} $. 
We will  use $a_{r }^{o} $ and $a_{r }^{cl} $ to  specify whether balls are open or closed.

\begin{definition}  \label{BIP} A  collection $\mathcal{C}$ of balls in a metric space $(X, d)$ is a {\em Besicovitch family} if 
for every pair  of distinct balls $B(x, r), B(y,s) \in \mathcal{C}$,
$x \notin B (y,s)$ and  $y \notin B (x,r)$. Denote by
$\mathcal{EBF}(X,d)$ the collection of all Besicovitch families $\mathcal{C}$
of $(X, d)$ with the property that all balls in
$\mathcal{C}$ have equal radius.  The 
{\em equal radius Besicovitch constant}  of $(X, d)$ is
\begin{equation}\label{BC}
E (X, d) := \sup \left\{	\sum_{B(x,  r) \in \mathcal{C}} \mathbf{1}_{B(x,  r)} (y): y \in X, \ \mathcal{C} \in \mathcal{EBF}(X,d) \right\}.
\end{equation}
We say that $(X, d)$ has the  {\em equal radius Besicovitch Intersection Property} with constant $E (X, d)$  if $E (X, d) < \infty$.
The 
{\em Besicovitch constant}  $L(X, d)$ is defined in the same way, save that the restriction that all balls in each collection have the same radius is lifted. 
We say that $(X, d)$ has the  {\em Besicovitch Intersection Property}   if $L (X, d) < \infty$.
\end{definition}

\begin{definition} \label{geomdoub} A metric space is {\it geometrically doubling}  if there exists a positive
integer $D$ such that every ball of radius $r$ can be covered with no more than $D$ balls
of radius $r/2$.  We call the smallest such $D$ the {\em doubling constant} of the space.
\end{definition}

\begin{remark} Call a  Besicovitch family  $\mathcal{C}$ {\em intersecting} if 
	$\cap \mathcal{C} \ne \emptyset$. It is well known that if $X$ is geometrically doubling with constant $D$, then $D$ is an upper bound for the cardinality of any intersecting Besicovitch family $\mathcal{C}$ with equal radius  $r$. To see why, consider any $y\in \cap \mathcal{C}$, and note that the centers of all balls in $\mathcal{C}$ form an $r$-net in $B(y,r)$;
we use the convention that $r$-nets are strict when dealing with closed balls, so the distance between any two points in the net is striclty larger than $r$, and non-strict when dealing with open balls. Cover $B(y,r)$ with  $\le D$ balls of radius $r/2$. Since each such ball contains at most the center of one ball from $\mathcal{C}$, the result follows. Thus, we always have $E (X, d) \le D$.

Geometrically doubling does not, by itself, imply the Besicovitch intersection property: a well known example
is given by  the Heisenberg groups
$\mathbb{H}^n$ with the
Kor\'any metric: cf.  \cite[pages 17-18]{KoRe} or
\cite[Lemma 4.4]{SaWh}. Thus, the Heisenberg groups provide a natural example of spaces where the equal radius Besicovitch intersection property holds and the Besicovitch intersection property fails.
\end{remark}

 The next proposition, for collections without the equal
radius restriction, appears in \cite[Proposition 2.4]{Al1}. 

\begin{proposition}  A metric space $(X, d)$ has the equal radius  Besicovitch
		intersection property with constant $E$  for collections of open balls, if and only if it
		 has the  Besicovitch
		 intersection property for collections of closed  balls, with the same constant.
		\end{proposition}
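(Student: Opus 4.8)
The plan is to prove the statement by showing that the two constants coincide, $E^{o}(X,d) = E^{cl}(X,d)$, where $E^{o}$ and $E^{cl}$ denote the equal radius Besicovitch constant of Definition \ref{BIP} computed with open and with closed balls respectively; finiteness of one is then equivalent to finiteness of the other, with the same value. First I would reduce the computation of each constant to finite families sharing a common point. Indeed, for a fixed $y$ and a family $\mathcal{C}\in\mathcal{EBF}(X,d)$, the quantity $\sum_{B(x,r)\in\mathcal{C}}\mathbf{1}_{B(x,r)}(y)$ counts exactly those balls of $\mathcal{C}$ that contain $y$; these balls form a subcollection $\mathcal{C}_y\subset\mathcal{C}$, which is again an equal radius Besicovitch family, since the defining pairwise condition is inherited by subsets. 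Selecting any $n$ of them whenever the count is at least $n$, one sees that both $E^{o}$ and $E^{cl}$ equal the supremum of those $n$ for which there exists a finite equal radius Besicovitch family of $n$ balls with nonempty intersection, the supremum being $+\infty$ exactly when such families exist for every $n$. Thus it suffices to exhibit, given such a finite configuration of $n$ closed balls, a corresponding one of $n$ open balls, and conversely.

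The decisive remark is that, for equal radius $r$, the Besicovitch condition on centers reduces to a single distance inequality, namely $d(x_i,x_j)\ge r$ for open balls and $d(x_i,x_j) > r$ for closed balls, while membership of $y$ reads $d(x_i,y)<r$ in the open case and $d(x_i,y)\le r$ in the closed case. The translation is then carried out by a mild perturbation of the common radius. For the closed-to-open direction, start with balls $B^{cl}(x_i,r)$, $1\le i\le n$, satisfying $d(x_i,x_j)>r$ for $i\ne j$ and $d(x_i,y)\le r$ for all $i$. Setting $r' := \min_{i\ne j} d(x_i,x_j)$ (and taking any $r' > r$ when $n\le 1$) gives $r' > r$, and the open balls $B^{o}(x_i,r')$ satisfy $d(x_i,x_j)\ge r'$, hence form an equal radius Besicovitch family, while $d(x_i,y)\le r < r'$ shows $y\in B^{o}(x_i,r')$ for every $i$.

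For the open-to-closed direction I would run the symmetric argument, this time shrinking the radius: given balls $B^{o}(x_i,r)$ with $d(x_i,x_j)\ge r$ and $d(x_i,y)<r$, set $r'' := \max_i d(x_i,y) < r$, the case $n\le 1$ being trivial and $r''>0$ being automatic once $n\ge 2$, since two distinct centers are at distance at least $r$ apart. Then $d(x_i,x_j)\ge r > r''$ yields the closed Besicovitch condition $d(x_i,x_j) > r''$, and $d(x_i,y)\le r''$ places $y$ in each $B^{cl}(x_i,r'')$. Combining the two directions gives $E^{o}(X,d) = E^{cl}(X,d)$, which is the assertion. The perturbation itself is elementary; the step that I expect to require the most care, and would write out in full, is the initial reduction to finite families sharing a common point, together with the bookkeeping that makes the resulting equality valid simultaneously in the finite case and in the case where the constant equals $+\infty$.
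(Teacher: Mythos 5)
Your proof is correct and follows essentially the same route as the paper's: for the closed-to-open direction you enlarge the radius to $\min_{i\neq j} d(x_i,x_j) > r$, exactly as in the paper, and for the open-to-closed direction you shrink the radius below $r$ while keeping the common point inside every ball, which is the paper's $r-\delta$ argument with the explicit choice $\delta = r - \max_i d(x_i,y)$. Your preliminary reduction to finite intersecting subfamilies is a welcome bit of extra care (the paper carries it out in the first direction and leaves it implicit in the second, where a single $\delta$ need not exist for an infinite family), but it does not change the substance of the argument.
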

		
\begin{proof} Denote by $E^o$ and $E^c$ the lowest constants
for collections of open balls and for collections of closed balls, respectively. 
Suppose first that  $E^o < \infty$. Let $\mathcal{C}$ be an intersecting  
 Besicovitch
family   of closed balls, all of which have the same radius $r$, and  select any finite subcollection $\{B^{cl} (x_1, r), \dots , B^{cl} (x_N, r)\}$. It is enough to
prove that $N \le E^o$. Let 
$t := \min\{d(x_j, x_i) : 1 \le i < j \le N\}$. Since $t > r$, it follows that 
$\{B^o (x_1, t), \dots , B^o (x_N, t)\}$ is an intersecting equal radius Besicovitch family
of open balls,
so $N \le E^o$.

Suppose next that  $E^c < \infty$. Let $\mathcal{C}$ be an intersecting
Besicovitch
family   of open balls, with equal radius  $r$. Select $y \in \cap \mathcal{C}$, and then choose $\delta > 0$ so small that 
for every ball $B^{o}(x,r) \in \mathcal{C}$,
we have  $y \in B^{o}(x, r- \delta)$. 
 Then
the collection $\{B^{cl}(x,r - \delta): B^{o}(x,r) \in \mathcal{C}\}$ is an intersecting equal radius Besicovitch family 
of closed balls, so its cardinality is bounded by $E^c$.
	\end{proof}

It is shown in \cite[Theorem 2.5]{Al1} that the existence of uniform weak type $(1,1)$ bounds for the centered maximal operator is equivalent to the Besicovitch intersection property. More precisely

\begin{theorem}\label{Borelequiv} Let $(X, d)$ be a  
	metric  space.  The following are equivalent:
	
	1)  $(X, d)$ has the  Besicovitch intersection property with constant $L$.
	
	2) For every $\tau$-additive, locally finite  Borel measure $\mu$ on $X$, the centered maximal operator
	associated to $\mu$ satisfies 
	$\|M_{\mu}\|_{L^1  \to L^{1,\infty}} \le 
	L$.	
\end{theorem}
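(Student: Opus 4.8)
The plan is to establish the two implications separately: the forward direction $1)\Rightarrow 2)$ by a Besicovitch-type covering and selection argument applied to the level sets of $M_\mu$, and the converse $2)\Rightarrow 1)$ by contraposition, exhibiting explicit finitely supported measures that saturate the constant $L$. By the Proposition proved above I may fix once and for all whether all balls are open or closed, so this technicality will not intervene.

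For $1)\Rightarrow 2)$, I would fix a metric measure space $(X,d,\mu)$, a function $g\in L^1(\mu)$ and a level $\alpha>0$, and set $E_\alpha:=\{x\in\operatorname{supp}\mu: M_\mu g(x)>\alpha\}$. For each $x\in E_\alpha$ there is a radius $r_x>0$ with $\int_{B(x,r_x)}|g|\,d\mu>\alpha\,\mu(B(x,r_x))$. The crucial step is to extract from $\{B(x,r_x):x\in E_\alpha\}$ a subfamily $\{B(x_i,r_i)\}_i$ that is a \emph{Besicovitch family} and still contains every center, hence covers all of $E_\alpha$; this I would do by a greedy selection ordered by decreasing radius, noting that whenever a ball is discarded its center already lies in a previously chosen ball of at least as large a radius, and this comparison of radii is precisely what forces $x_i\notin B(x_j,r_j)$ for $i\ne j$. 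Granting the selection, hypothesis $1)$ yields the pointwise overlap bound $\sum_i\mathbf 1_{B(x_i,r_i)}\le L$, and therefore
\begin{equation*}
\mu(E_\alpha)\le\mu\Big(\bigcup_i B(x_i,r_i)\Big)\le\sum_i\mu(B(x_i,r_i))<\frac1\alpha\int\Big(\sum_i\mathbf 1_{B(x_i,r_i)}\Big)|g|\,d\mu\le\frac{L}{\alpha}\,\|g\|_{L^1(\mu)},
\end{equation*}
which is the weak type $(1,1)$ estimate with constant $L$.

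For $2)\Rightarrow 1)$, I would argue by contraposition: assuming $L(X,d)>L$, I produce a measure violating the bound. By definition of the Besicovitch constant there are a point $y$ and balls $B(x_1,r_1),\dots,B(x_N,r_N)$ forming a Besicovitch family with $y\in B(x_i,r_i)$ for every $i$ and $N>L$ (necessarily $N\ge 2$), and the Besicovitch condition guarantees that $y,x_1,\dots,x_N$ are $N+1$ distinct points. I then take the finitely supported, hence $\tau$-additive and locally finite, measure $\mu:=a\,\delta_y+b\sum_{i=1}^N\delta_{x_i}$ and the test function $g:=\mathbf 1_{\{y\}}$. Since no $B(x_i,r_i)$ contains any $x_j$ with $j\ne i$, one has $\mu(B(x_i,r_i))=a+b$ while $\int_{B(x_i,r_i)}g\,d\mu=a$, so $M_\mu g(x_i)\ge a/(a+b)$ for all $i$. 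Choosing $\alpha:=a/(a+b)$ gives
\begin{equation*}
\frac{\alpha\,\mu(\{M_\mu g\ge\alpha\})}{\|g\|_{L^1(\mu)}}\ge\frac{\frac{a}{a+b}\cdot Nb}{a}=\frac{Nb}{a+b}\xrightarrow[\;b\to\infty\;]{}N>L,
\end{equation*}
so $\|M_\mu\|_{L^1\to L^{1,\infty}}>L$ for a suitable choice of $a,b$, contradicting $2)$; letting $N\to\infty$ disposes of the case $L(X,d)=\infty$.

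The main obstacle is the selection step in the forward direction, specifically making it rigorous beyond finite collections: the family $\{B(x,r_x):x\in E_\alpha\}$ may be uncountable and the radii need not attain their supremum, so the decreasing-radius greedy must be realized either by a transfinite (Zorn) recursion or by first truncating to bounded radii and reducing to finite subfamilies through the $\tau$-additivity of $\mu$. One must also verify that the resulting Besicovitch family is countable, which follows from the overlap bound $L$ together with local finiteness since balls of positive measure with uniformly bounded overlap cannot accumulate, and that $\bigcup_i B(x_i,r_i)$ is measurable. The remaining points, the passage between strict and non-strict superlevel sets and between open and closed balls, are routine and are absorbed by the earlier Proposition.
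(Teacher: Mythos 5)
Your direction 2) $\Rightarrow$ 1) is correct, and it is essentially the standard construction: it matches, mutatis mutandis, the measures $\mu_c = c\,\delta_y + \sum_i \delta_{x_i}$ and the test function $\mathbf{1}_{\{y\}}$ used in this paper's proof of 3) $\Rightarrow$ 1) of Theorem \ref{discreteequiv} (note, in passing, that the paper does not prove Theorem \ref{Borelequiv} at all; it quotes it from \cite[Theorem 2.5]{Al1}, so the only in-paper model to compare with is Theorem \ref{discreteequiv}). The genuine gap is in 1) $\Rightarrow$ 2), precisely at the point you flag and then wave away: extracting a \emph{two-sided} Besicovitch family that still covers $E_\alpha$. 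The decreasing-radius greedy yields a Besicovitch family only when the supremum of the remaining radii is attained at every stage: if you must settle for a ball of radius $>(1-\varepsilon)\sup$, the two-sided condition can fail. Concretely, with closed balls, suppose the supremum of the radii is $1$ and is not attained, and you select $B^{cl}(x_1, 0.95)$; if $d(x_1,x_2)=0.96$ and $r_{x_2}=0.99$, then $x_2 \notin B^{cl}(x_1,0.95)$, so $B^{cl}(x_2,0.99)$ survives and may be selected later, yet $x_1 \in B^{cl}(x_2,0.99)$ --- the selected family is not a Besicovitch family, and hypothesis 1) gives no overlap bound for it. Neither of your rescue routes repairs this. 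A Zorn-maximal Besicovitch subfamily need not cover the centers: adding $B(x,r_x)$ can be blocked not because $x$ is already covered but because $B(x,r_x)$ contains the center of a previously admissible smaller ball (take $B(x_1,1)$ and $B(x_2,10)$ with $d(x_1,x_2)=5$: the singleton $\{B(x_1,1)\}$ is maximal but leaves $x_2$ uncovered), so maximality does not yield $E_\alpha \subseteq \bigcup_i B(x_i,r_i)$. And the $\tau$-additivity reduction bounds the wrong quantity: $\tau$-additivity approximates $\mu\bigl(\bigcup_{x\in E_\alpha} B^{o}(x,r_x)\bigr)$ by finite subunions, but the exact greedy applied to a finite subfamily covers only its finitely many \emph{centers}, not the finite union, while what you need is an inner approximation of $\mu(E_\alpha)$ itself --- and $\tau$-additive measures are in general neither inner regular by compacts nor approximable from inside by finite sets of centers (think of a non-atomic $\mu$). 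Also, the measurability worry sits with $E_\alpha$, which need not be Borel (so $\mu(E_\alpha)$ must be read as outer measure), not with $\bigcup_i B(x_i,r_i)$, which is a countable union of balls.

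It is instructive to see how the paper's own selection, in the proof of 1) $\Rightarrow$ 2) of Theorem \ref{discreteequiv}, sidesteps exactly this trap: the slack is put in the \emph{measure}, not the radius --- one chooses $u_{k+1}$ with $\mu B^{o}(u_{k+1},r) < (1+\varepsilon)\inf$, which costs only a harmless factor $(1+\varepsilon)$ that disappears on letting $\varepsilon \downarrow 0$, whereas slack in the radius destroys the Besicovitch property itself and leaves no limit to take. That works there because with equal radii the two-sided condition is automatic ($d(u_i,u_j)\ge r$ is symmetric); with variable radii, as in the present theorem, a further idea is needed, and that is exactly the content of \cite[Theorem 2.5]{Al1} --- it cannot be dismissed as routine. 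The same caution applies to your claim that the open/closed passage is ``absorbed by the earlier Proposition'': that Proposition (and its variable-radius analogue, \cite[Proposition 2.4]{Al1}, which is the one actually relevant here) equates the \emph{geometric} constants, but the operator bounds for open and for closed balls are distinct analytic statements, and the paper needs a separate monotone-limit argument (the $E_N$, $r_x$, $A^{o}_{r+1/N}$ step in Theorem \ref{discreteequiv}) to transfer one to the other.
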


The situation regarding the existence of  strong type $(1,1)$ bounds for the averaging operators 
$A_{r, \mu}$, uniform in both $r$ and $\mu$ is entirely analogous, but with the equal radius Besicovitch intersection property replacing the  Besicovitch intersection property.

\begin{theorem}\label{discreteequiv} Let $(X, d)$ be a  
	metric  space.  The following are equivalent:
	
1)  The space $(X, d)$ has the  equal radius Besicovitch intersection property with constant $E$.

2) For every $r > 0$ and every $\tau$-additive, locally finite  Borel measure $\mu$ on $X$, we have
$\|A_{r, \mu}\|_{L^1  \to L^1} \le 
E$.	
	
3)  For every  $r > 0$ and every finite  weighted sum of Dirac deltas $\mu := \sum_{i = 1}^N c_i \delta_{x_i}$,
the averaging  operator  satisfies 
$\|A_{r, \mu}\|_{L^1  \to L^1}  \le 
E$.
\end{theorem}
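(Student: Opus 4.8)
The plan is to establish the cycle $(1)\Rightarrow(2)\Rightarrow(3)\Rightarrow(1)$. The implication $(2)\Rightarrow(3)$ is immediate, since a finite weighted sum of Dirac deltas is itself a $\tau$-additive, locally finite Borel measure, and so is covered by the hypothesis in $(2)$. The two implications carrying content are $(1)\Rightarrow(2)$ and $(3)\Rightarrow(1)$, and both are organized around the conjugate function $a_r$ of Definition \ref{conjugate}.

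For $(1)\Rightarrow(2)$ I would first reduce the operator bound to a pointwise bound on $a_r$. Writing $A_r|g|$ for the average of $|g|$ and using Tonelli's theorem together with the symmetry $\mathbf 1_{B(x,r)}(z)=\mathbf 1_{B(z,r)}(x)$, one gets
\[
\|A_r g\|_{L^1(\mu)}\le\int A_r|g|\,d\mu=\int |g|\,a_r\,d\mu\le\|a_r\|_{L^\infty(\mu)}\,\|g\|_{L^1(\mu)},
\]
so it suffices to prove $a_r(y)\le E$ for $\mu$-a.e.\ $y\in\operatorname{supp}\mu$. Fix such a $y$ and set $A:=B(y,r)\cap\operatorname{supp}\mu$; by symmetry of the distance, $x\in A$ precisely when $y\in B(x,r)$, so every ball $B(x,r)$ with $x\in A$ contains $y$. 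The combinatorial core is then the observation that any $r$-separated finite subset $\{x_1,\dots,x_k\}\subseteq A$ (its points pairwise at distance at least $r$) yields balls $B(x_1,r),\dots,B(x_k,r)$ forming an equal radius Besicovitch family all of whose members contain $y$, whence $k\le E$ by $(1)$.

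The difficulty, and the main obstacle, is to upgrade this bound on the \emph{number} of separated centers into the bound $a_r(y)=\int_A(\mu B(x,r))^{-1}\,d\mu(x)\le E$ on a \emph{weighted} integral. I would accomplish this by a Besicovitch-type selection (a hard-core thinning): run a Poisson process of intensity $t\mu$ restricted to $B(y,2r)$, attach independent uniform marks to its points, and retain a point only when its mark is minimal among the process points lying in its own $r$-ball. Retained points are automatically $r$-separated, so those falling in the bounded set $A$ form an equal radius Besicovitch family all containing $y$, and hence number at most $E$ almost surely; taking expectations bounds their mean count by $E$. On the other hand a direct computation of the retained intensity shows this mean count equals
\[
\int_A\frac{1-e^{-t\,\mu B(x,r)}}{\mu B(x,r)}\,d\mu(x),
\]
which increases to $a_r(y)$ as $t\to\infty$ by monotone convergence. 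Comparing the two evaluations gives $a_r(y)\le E$, completing $(1)\Rightarrow(2)$. Alternatively, one may first reduce to finitely atomic $\mu$ and run the analogous weighted selection argument in the discrete setting.

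Finally, for $(3)\Rightarrow(1)$ I would detect the overlap constant with point masses. Given an equal radius Besicovitch family $\{B(x_1,r),\dots,B(x_N,r)\}$ with a common point $y\in\bigcap_i B(x_i,r)$ (and, after a harmless perturbation, $y\notin\{x_i\}$), test $(3)$ on $\mu_\varepsilon:=\sum_{i=1}^N\delta_{x_i}+\varepsilon\delta_y$. The Besicovitch condition forces $\mu_\varepsilon B(x_j,r)=1+\varepsilon$, since only $x_j$ and $y$ lie in $B(x_j,r)$, and as each $x_j\in B(y,r)$ one computes $a_r(y)\ge N/(1+\varepsilon)$. Because $y$ is an atom of $\mu_\varepsilon$ it cannot lie in a null set, so $\|A_{r,\mu_\varepsilon}\|_{L^1\to L^1}=\|a_r\|_{L^\infty(\mu_\varepsilon)}\ge a_r(y)$ by \cite[Theorem 3.3]{Al2}; hypothesis $(3)$ then gives $N/(1+\varepsilon)\le E$, and letting $\varepsilon\to0$ yields $N\le E$. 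Taking the supremum over all such families shows $E(X,d)\le E$, which is $(1)$.
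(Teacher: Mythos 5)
Your proposal is correct, and its core implication $(1)\Rightarrow(2)$ takes a genuinely different route from the paper. The paper reduces to bounding the conjugate function $a_r$ (as you do, though it cites the full duality $\|A_r\|_{L^1\to L^1}=\|a_r\|_{L^\infty}$ from \cite[Theorem 3.3]{Al2}, while you only need the easy Tonelli direction here), and then runs a deterministic greedy selection: it first shrinks to a smaller radius $s<r$ so that $\inf\{\mu B^o(x,r):x\in B^o(y,s)\}>0$, iteratively picks points $u_k$ with $(1+\varepsilon)$-almost-minimal ball measure outside the previously chosen balls, dominates the integrand pointwise by $(1+\varepsilon)\sum_j \mathbf{1}_{B^o(u_j,r)}/\mu B^o(u_j,r)$ with at most $E$ terms, and finally lets $\varepsilon\downarrow 0$, $s\uparrow r$; the closed-ball case then requires a separate approximation by open balls of radius $r+1/N$ together with a truncation $\mathbf{1}_{B(w,R)}\mathbf{1}_{\{A^{cl}_rf\le R\}}$. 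Your Mat\'ern-type hard-core thinning replaces all of this: the retained points of the marked Poisson process are, almost surely, pairwise mutually non-containing (each would need the smaller mark), so they form an equal radius Besicovitch family through $y$ of size at most $E$, and the Mecke/Campbell computation of the retained intensity gives exactly $\int_A(1-e^{-t\mu B(x,r)})/\mu B(x,r)\,d\mu$, which increases to $a_r(y)$. Notably your argument needs no lower bound on ball measures (the factor $(1-e^{-t\mu B(x,r)})/\mu B(x,r)\le t$ is automatic), no $s\uparrow r$ limit, and treats open and closed balls uniformly, since the thinning yields non-containment directly in whichever convention is in force; the price is the probabilistic machinery (here harmless, since the intensity measure $t\mu\!\restriction_{B(y,2r)}$ is finite by local finiteness, so the process is just a Poissonized i.i.d. sample, ties among marks are null, and one needs the standard measurability of $x\mapsto\mu B(x,r)$). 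Two small points to state explicitly: phrase the separation of retained points as mutual non-containment rather than ``distance at least $r$'' so that the closed-ball case (which requires strict separation) is covered verbatim; and in $(3)\Rightarrow(1)$ the ``harmless perturbation'' is unnecessary, since $y=x_j$ together with $y\in B(x_i,r)$ for $i\ne j$ would already violate the Besicovitch condition. Your $(3)\Rightarrow(1)$ is otherwise essentially the paper's argument viewed through duality: the paper tests $A_{r,\mu_c}$ directly on $f_c=c^{-1}\mathbf{1}_{\{y\}}$, while you read off $a_r(y)\ge N/(1+\varepsilon)$ and invoke \cite[Theorem 3.3]{Al2}, which is legitimate because hypothesis $(3)$ already guarantees boundedness so the equality applies; the paper's version is marginally more self-contained.
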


\begin{proof} Let us  show that  1) $\implies$ 2). Disregarding a set of measure zero if needed, we suppose that 
$X = \operatorname{supp} \mu$, so every ball has positive measure.
Fix $y \in X$ and $r > 0$. First we consider  the open balls case. Let $0 < s < r$, let 
$$g(x) := \frac{\mathbf{1}_{B^o(y,r)}(x)}{\mu B^o(x,r)},   
\mbox{ \ and let \ }
g_s(x) := \frac{\mathbf{1}_{B^o(y,s)}(x)}{\mu B^o(x,r)}.
$$
 Since balls are open,
$g_s\uparrow g$ everywhere as $s\uparrow r$,
so we can use the monotone convergence theorem. Thus, it is enough to show that $\lim_{s\to r}
 \int_{X} g_s d\mu \le E(X,d)$ to conclude that
 $\|a^o_r\|_{L^\infty(\mu)\to L^\infty (\mu)} \le E(X,d)$. Then  the result follows, 
 since $\|A_r \|_{L^1(\mu)\to L^1(\mu)} = \|a_r\|_{L^\infty(\mu)\to L^\infty (\mu)} $ by  \cite[Theorem 3.3]{Al2}.	

Next we argue as in the proof of 	
 \cite[Theorem 3.5]{Al2}, which dealt with the case where $X$ is geometrically doubling. 
 	Note first  that $b_1 := \inf \{\mu B^o(x,r) : x \in B^o(y,s)\} > 0$. To see why, observe that for every $x\in B^o(y,s)$ and 
 every $w\in B^o(y, r - s)$, $d(x, w) \le d(x, y) + d (y, w) < s + r - s$,
so $ B^o(y, r - s)\subset B^o(x, r)$ and thus $0 < \mu  B^o(y, r - s) \le b_1$.
	Now take $0 < \varepsilon \ll 1$, and choose $u_1\in B^o(y,s)$ so that
	$\mu B^o(u_1, r) < (1 + \varepsilon) b_1$; let 
	$b_2 := \inf \{\mu B^o(x,r) : x \in B^o(y,s) \setminus B^o(u_1, r) \}$, and select
	$u_2\in B^o(y,s) \setminus B^o(u_1, r) $ so that $\mu B^o(u_2, r) < (1 + \varepsilon) b_2$;
	repeat, with 
	$b_{k + 1} := \inf \{\mu B^o(x,r) : x \in B^o(y,s) \setminus \cup_1^k B^o(u_i, r)  \}$, and
	$u_{k + 1}\in B^o(y,s) \setminus \cup_1^k B^o(u_i, r) $ so that
	$\mu B^o(u_{k + 1}, r) < (1 + \varepsilon) b_{k + 1}$. Since the 
	balls $B^o(u_i, r)$ form a Besicovitch family and all contain $y$, 
	there is an $m\le E(X,d)$ such that
	$B^o(y,s) \setminus \cup_1^m B^o(u_i, r) = \emptyset$, and then the process stops.

	Fix $x\in B^o(y,s)$, and let $i$ be the first index such that $x\in B^o(u_i, r)$. Then
	$$
	\frac{\mathbf{1}_{B^o(y,s)}(x)}{\mu B^o(x,r)} 
	\le 
	(1 + \varepsilon) \frac{\mathbf{1}_{B^o(y,s) \cap B^o(u_i ,r)}(x)}{\mu B^o(u_i ,r)} 
	\le 
	(1 + \varepsilon)  \sum_{j=1}^m\frac{\mathbf{1}_{B^o(y,s) \cap B^o(u_j ,r)}(x)}{\mu B^o(u_j ,r)},
	$$
	so
	\begin{equation*}
\int_X g_s d\mu   
	=
	\int_X  \frac{\mathbf{1}_{B^o(y,s)}(x)}{\mu B^o(x,r)}  \  d\mu(x) 
	\le
	\int_X  (1 + \varepsilon)  \sum_{j=1}^m\frac{\mathbf{1}_{B^o(y,s) \cap B^o(u_j ,r)}(x)}{\mu B^o(u_j ,r)}  \  d\mu(x) 
	\end{equation*}
	\begin{equation*}
	\le 
	(1 + \varepsilon)   \int_X \sum_{j=1}^m\frac{\mathbf{1}_{B^o(u_j ,r)}(x)}{\mu B^o(u_j , r)}  \  d\mu(x) 
	\le
	(1 + \varepsilon)  E (X, d),
	\end{equation*}
	and now
	$a^o_r (y) \le E (X,d) 
	$ follows by letting $\varepsilon \downarrow 0$ and  $s \uparrow r$.
	
The closed balls case is proven using the result for open balls.  Let $0 \le f \in L^1(\mu)$, 
$w\in X$, 
$R \ge 1$,  and  $\varepsilon > 0$. By monotone convergence, taking $R \uparrow \infty$, it is
enough to show that 
$$
\|\mathbf{1}_{B (w ,R)} \mathbf{1}_{\{ A^{cl}_{r} f \le R\}} A^{cl}_{r} f\|_{L^1} 
\le
\varepsilon + (1 + \varepsilon) E (X,d) \|f\|_{L^1}.
$$
For each $x\in B (w ,R)$, choose $r_x > 0$ so that 
$\mu B^{o}(x, r + r_x) < (1 + \varepsilon) \mu B^{cl}(x, r)$,
and  let
$E_n := \{x \in B(w, R) : r_x > 1/n\}$. Then select $N \gg 1$
satisfying  
$\mu \left(B(w, R)  \setminus E_N\right) < \varepsilon / R$.
Now for all $x\in E_N$, we have
$$
A^{cl}_{r} f (x)
\le
\frac{1}{ \mu B^{cl}(x, r)} \int_{B^{o}(x, r + 1/N) } f d\mu
$$
$$
\le
\frac{1 + \varepsilon}{ \mu B^{o}(x, r + 1/N) } \int_{B^{o}(x, r + 1/N)} f d\mu = (1 + \varepsilon) A^o_{r + 1/N} f(x),
$$
so
$$
\|\mathbf{1}_{B (w ,R)} \mathbf{1}_{\{ A^{cl}_{r} f \le R\}} A^{cl}_{r} f\|_{L^1} 
\le
\|\mathbf{1}_{B (w ,R) \setminus E_N} \mathbf{1}_{\{ A^{cl}_{r} f \le R\}} A^{cl}_{r} f\|_{L^1} + \|\mathbf{1}_{E_N} A^{cl}_{r} f\|_{L^1} 
$$
$$
\le \varepsilon + 
(1 + \varepsilon)\|A^o_{r + 1/N} f\|_{L^1} 
\le 
\varepsilon + (1 + \varepsilon) E (X,d) \|f\|_{L^1}.
$$

Since 3) is a special  case of 2), the only  implication left is 3) $\implies$ 1); we prove that
	if  $\mathcal{C}$  is an intersecting   Besicovitch family in $(X,d)$ of equal radius $r$ and
	cardinality $ > E$, then there exists a discrete measure $\mu_c$ with 
	finite support, for which 
	$\|A_{r, \mu_c}\|_{L^1\to L^{1}} >
	E$. 
	We may suppose
	that $\mathcal{C} = \{B(x_1, r), \dots ,B(x_{E + 1}, r)\}$
	by throwing away some balls if needed.
	Let $y \in \cap \mathcal{C}$, and for $0 < c \ll 1$, define 
	$\mu_c := c \delta_y + \sum_{i= 1}^{L + 1} \delta_{x_i}$. Set
	$f_c = c^{-1} \mathbf{1}_{\{y\}}$. Then $\|f_c\|_1 = 1$, and
	for $1 \le i \le E+ 1$, we have $A_{r, \mu_c} f_c (x_i) = 1/(1 + c)$,  while $A_{r, \mu_c} f_c (y) > 0$. Thus
	$$
	\|A_{r, \mu_c} f_c\|_1 >
\frac{E + 1}{ 1 + c},
	$$
	and the result follows by taking 
	$c$ small enough.
	\end{proof}

Since $\|A_{r,\mu} \|_{L^\infty(\mu)\to L^\infty(\mu)}  =1$, by  interpolation
or  by Jensen's inequality (cf. \cite[Theorem 2.10]{Al3})
for all  $1 < p < \infty$, we have $\|A_{r,\mu}  \|_{L^p(\mu)\to L^p(\mu)}  \le  E(X,d)^{1/p}$. 

\begin{remark} In addition to  having $\sup_{r, \mu}\|A_{r,\mu}  \|_{L^1(\mu)\to L^1(\mu)} =  E(X,d)$, using the same measures and functions it is easy to see that equality also holds for the weak type $(1,1)$ bounds, that is, 
$\sup_{r, \mu}\|A_{r,\mu}  \|_{L^1(\mu)\to L^{1, \infty}(\mu)} =  E(X,d)$. In fact, since the function $f_c$ is a scalar multiple of an indicator function, this equality holds in the restricted weak type (1,1) case.
\end{remark}

The preceding theorem entails that the uniform weak type $(1,1)$ of the centered
maximal operator is stronger than the uniform strong type $(1,1)$ of the averaging
operators.

\begin{corollary}  \label{weakstrong}
	Given any metric space   $(X, d)$, we have 
	$$
	\sup_{r, \mu}\|A_{r, \mu }\|_{L^1\to L^{1}} \le \sup_{\mu} \|M_{\mu}\|_{L^1  \to L^{1,\infty}},
	$$
	where the supremum on the left hand side is taken over all $r > 0$ and all $\tau$-additive, locally finite Borel measures $\mu$ on $X$, and the supremum on the right, over all such $\mu$.
\end{corollary}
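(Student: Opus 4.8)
The plan is to reduce the corollary to the trivial comparison of the two Besicovitch constants, letting the two preceding theorems do all the analytic work by identifying each operator norm with the relevant constant. First I would record that, by Theorem \ref{discreteequiv} together with the remark immediately following it, the left-hand side is exactly the equal radius Besicovitch constant,
$$
\sup_{r,\mu}\|A_{r,\mu}\|_{L^1\to L^1} = E(X,d).
$$
Symmetrically, Theorem \ref{Borelequiv} identifies the right-hand side with the full Besicovitch constant,
$$
\sup_\mu\|M_\mu\|_{L^1\to L^{1,\infty}} = L(X,d).
$$
In each case one reads the theorem as an equivalence valid for every candidate bound: a constant is an upper bound for the corresponding intersection multiplicities precisely when it is a uniform bound for the operator norms, so taking the infimum over admissible bounds yields these two equalities. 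When a constant is $+\infty$ the same equivalence shows the matching supremum of norms is infinite as well, so the identifications persist.

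The only remaining point, and it is immediate, is the inequality $E(X,d) \le L(X,d)$. This follows directly from Definition \ref{BIP}: every equal radius Besicovitch family is in particular a Besicovitch family, so $\mathcal{EBF}(X,d)$ is contained in the collection of all Besicovitch families of $(X,d)$. The supremum defining $E(X,d)$ in \eqref{BC} is therefore taken over a subcollection of the families appearing in the definition of $L(X,d)$, whence $E(X,d) \le L(X,d)$. Chaining the three relations gives
$$
\sup_{r,\mu}\|A_{r,\mu}\|_{L^1\to L^1} = E(X,d) \le L(X,d) = \sup_\mu\|M_\mu\|_{L^1\to L^{1,\infty}},
$$
which is exactly the assertion.

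Since the substantive content is already packaged inside Theorems \ref{Borelequiv} and \ref{discreteequiv}, there is no genuine obstacle here. The one place that calls for care is the bookkeeping in passing from each parametric ``$\le$-with-constant'' equivalence to the statement that the corresponding supremum of operator norms \emph{equals} the Besicovitch constant, and in particular verifying that this identification still holds when the constant equals $+\infty$, in which case the right-hand side of the corollary is infinite and the inequality is vacuous.
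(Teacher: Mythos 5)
Your proof is correct and is essentially the argument the paper intends: the corollary is stated without a separate proof precisely because it follows, as you say, from the identifications $\sup_{r,\mu}\|A_{r,\mu}\|_{L^1\to L^1}=E(X,d)$ (Theorem \ref{discreteequiv} and the remark following it) and $\sup_\mu\|M_\mu\|_{L^1\to L^{1,\infty}}=L(X,d)$ (Theorem \ref{Borelequiv}), together with the trivial inclusion-based inequality $E(X,d)\le L(X,d)$. Your explicit attention to the case of infinite constants is a sound piece of bookkeeping that the paper leaves implicit.
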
 

\begin{corollary}  \label{L1conv}
	If  $(X, d)$ has the equal radius Besicovitch intersection property, and $\mu$ is a
	$\tau$-additive Borel measure on $X$, then for  every $f\in L^p(\mu)$,  $1 \le p < \infty$,
	we have $\lim_{r\to 0}  A_{r} f  =  f$ in $L^p$.
Additionally, if  $(X, d)$ has the Besicovitch intersection property,  then 
$\lim_{r\to 0}  A_{r} f  =  f$ almost everywhere.
\end{corollary}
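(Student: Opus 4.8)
The plan is to deduce both statements from the uniform operator bounds already established, via the standard density / maximal-function machinery; the only input that uses the hypotheses is the choice of bound. For the $L^p$ convergence I would use only the uniform strong type $(p,p)$ bounds $\|A_{r,\mu}\|_{L^p\to L^p}\le E(X,d)^{1/p}$, valid by Theorem \ref{discreteequiv} together with the interpolation remark following it. For the a.e.\ convergence I would invoke the strictly stronger Besicovitch intersection property, which by Theorem \ref{Borelequiv} furnishes a uniform weak type $(1,1)$ bound $\|M_\mu\|_{L^1\to L^{1,\infty}}\le L(X,d)$ for the centered maximal operator, and whose failure to follow from the equal radius property is exactly why the a.e.\ statement needs the extra hypothesis.

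\emph{$L^p$ convergence.} I would fix as dense subclass $\mathcal{D}\subset L^p(\mu)$ the bounded Lipschitz functions with bounded support; density follows from the regularity of finite Borel measures on metric spaces (approximate a set of finite measure from inside by closed sets and from outside by open sets, then interpolate by a Lipschitz Urysohn function), localized to bounded sets since $\mu$ is locally finite. For $g\in\mathcal{D}$ and $x\in\operatorname{supp}\mu$ one has the pointwise estimate
$$|A_r g(x)-g(x)|\le \frac{1}{\mu B(x,r)}\int_{B(x,r)}|g(y)-g(x)|\,d\mu(y)\le \operatorname{Lip}(g)\,r,$$
so $A_r g\to g$ pointwise as $r\to 0$; moreover $|A_r g|\le\|g\|_\infty$ and, for $r\le 1$, $A_r g$ is supported in a fixed bounded (hence finite-measure) enlargement of $\operatorname{supp} g$, so dominated convergence gives $\|A_r g-g\|_{L^p}\to 0$. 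The three-term estimate
$$\|A_r f-f\|_{L^p}\le\|A_r(f-g)\|_{L^p}+\|A_r g-g\|_{L^p}+\|g-f\|_{L^p}\le\big(E(X,d)^{1/p}+1\big)\|f-g\|_{L^p}+\|A_r g-g\|_{L^p},$$
combined with the uniform bound, yields $\limsup_{r\to 0}\|A_r f-f\|_{L^p}\le(E(X,d)^{1/p}+1)\|f-g\|_{L^p}$, which is arbitrarily small by density.

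\emph{Almost everywhere convergence.} I would first reduce to $f\in L^1(\mu)$ by localization: fixing $w\in X$ and writing $X=\bigcup_{n\ge 1}B(w,n)$, it suffices to prove a.e.\ convergence on each $B(w,R)$. Since $B(x,r)\subset B(w,R+1)$ whenever $x\in B(w,R)$ and $r<1$, one has $A_r f=A_r f_1$ on $B(w,R)$ for $r<1$, where $f_1:=f\,\mathbf{1}_{B(w,R+1)}\in L^1(\mu)$, while $f=f_1$ there; so it is enough to treat $f\in L^1(\mu)$. Setting $\Omega f(x):=\limsup_{r\to 0}|A_r f(x)-f(x)|$ and choosing $g\in\mathcal{D}$ with $\Omega g\equiv 0$, the bound $\limsup_{r\to 0}|A_r(f-g)(x)|\le M_\mu(f-g)(x)$ gives the subadditive estimate $\Omega f\le M_\mu(f-g)+|f-g|$, whence by the weak $(1,1)$ bound and Chebyshev,
$$\mu\{\Omega f>\alpha\}\le\mu\{M_\mu(f-g)>\alpha/2\}+\mu\{|f-g|>\alpha/2\}\le\frac{2\big(L(X,d)+1\big)}{\alpha}\|f-g\|_{L^1}.$$
Letting $\|f-g\|_{L^1}\to 0$ forces $\mu\{\Omega f>\alpha\}=0$ for every $\alpha>0$, hence $\Omega f=0$ $\mu$-a.e., that is $A_r f\to f$ a.e.

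\emph{Main obstacle.} The one nonroutine point is confirming that $\mathcal{D}$ is dense in $L^p(\mu)$ for an \emph{arbitrary} $\tau$-additive, locally finite Borel measure on a metric space, rather than merely a Radon measure on a locally compact space, and checking that the pointwise estimate and the domination pass uniformly to both the open- and closed-ball operators $A_r^{o}$ and $A_r^{cl}$; both are handled by the regularity of Borel measures on metric spaces and by the equivalence of the open- and closed-ball constants furnished by the preceding Proposition.
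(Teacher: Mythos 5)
Your argument is correct and coincides with the approach the paper itself indicates (the paper only sketches the proof): uniform $L^p$ boundedness of the averaging operators plus a density argument for the norm convergence, and the uniform weak type $(1,1)$ bound for $M_\mu$ from Theorem \ref{Borelequiv} plus the same density scheme, via the oscillation function $\Omega f$, for the almost everywhere statement. The only micro-repair needed is in your density step: the Urysohn quotient interpolating between the closed sets $F$ and $O^c$ need not be Lipschitz when $d(F,O^c)=0$, but the truncated distance functions $\min\{1, d(\cdot, O^c)/\delta\}$ are Lipschitz, bounded, supported in a bounded set, and converge to $\mathbf{1}_{O}$ in $L^p(\mu)$ as $\delta\downarrow 0$, which suffices.
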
 

The $L^p$ convergence follows in a standard fashion from the uniform boundedness of
the averaging operators (cf. \cite{Al2} for more details), while the a. e. convergence is a consequence of the weak type of the centered maximal operator. For homogeneous distances on homogeneous groups, the almost everywhere convergence had already appeared in \cite[Theorem 1.5]{LeRi}.

\vskip .2 cm

Analogously to the case of the centered maximal operator  (see \cite{Al1}) given any $p \in (1, \infty)$, the uniform weak
type $(p,p)$  implies the  equal radius Besicovitch intersection property, and consequently, one can extrapolate from uniform
weak type $(p,p)$ to uniform strong type $(1,1)$. 
Recall that the floor function $\lfloor x \rfloor$ denotes the integer part of $x$.

\begin{theorem}\label{extrapolation} Let $(X, d)$ be a  
	metric  space. Each of the following statements implies the next:
	
1)	There exist a $p$ with $1 < p < \infty$ and an integer $N\ge 1$, such that
for every discrete, finite  Borel measure $\mu$ with finite support
in $X$, and every $r > 0$, the averaging operators
$A_{r, \mu}$ satisfy 
	$\|A_{r, \mu}\|_{L^p\to L^{p,\infty}} \le N$.
	
2) The space $(X, d)$ has the  equal radius Besicovitch intersection property with constant 
$\lfloor p^p (p - 1)^{(1-p)} N^p \rfloor$.
	
	3)  For every $\tau$-additive, locally finite  Borel measure $\mu$ on $X$ and every $r > 0$,  the averaging operators
	$A_{r, \mu}$ satisfy 
	$\|A_{r, \mu}\|_{L^1\to L^{1}} \le 
	\lfloor p^p (p - 1)^{(1-p)} N^p \rfloor$.	
	\end{theorem}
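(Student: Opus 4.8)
The plan is to treat the two implications separately, with essentially all of the content residing in $1) \implies 2)$. For $2) \implies 3)$ I would invoke the previous theorem directly: statement $2)$ is verbatim condition $1)$ of Theorem~\ref{discreteequiv} with $E = \lfloor p^p(p-1)^{1-p}N^p\rfloor$, and the implication $1) \implies 2)$ there furnishes exactly the uniform $L^1 \to L^1$ bound demanded by $3)$. No separate argument is needed.

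For $1) \implies 2)$ I would argue by contradiction, recycling the test-measure construction from the proof of $3) \implies 1)$ in Theorem~\ref{discreteequiv}, but now tuned to the weak type $(p,p)$ hypothesis. Set $K := \lfloor p^p(p-1)^{1-p}N^p\rfloor$ and suppose $E(X,d) > K$. By the definition of the equal radius Besicovitch constant, some family in $\mathcal{EBF}(X,d)$ of common radius $r$ has a point $y$ lying in at least $K+1$ of its balls; discarding the balls that miss $y$, I would be left with an intersecting equal radius Besicovitch family $\{B(x_1,r),\dots,B(x_M,r)\}$ with $M = K+1$ and $y \in \bigcap_i B(x_i,r)$.

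The test data would be the finite discrete measure $\mu := (p-1)\delta_y + \sum_{i=1}^M \delta_{x_i}$ and the function $f := \mathbf{1}_{\{y\}}$. The Besicovitch condition gives $x_j \notin B(x_i,r)$ for $j \ne i$ and $y \in B(x_i,r)$ (and, since $M \ge 2$, forces $y$ and the centers to be distinct points), so that $\mu(B(x_i,r)) = p$ and $\int_{B(x_i,r)} f\,d\mu = p-1$; hence $A_{r,\mu}f(x_i) = (p-1)/p$ at every center, whereas $A_{r,\mu}f(y) = (p-1)/(p-1+M) < (p-1)/p$ because $B(y,r)$ contains all $M$ centers. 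Choosing the level $\alpha := (p-1)/p$, the superlevel set captures exactly the $M$ centers, so $\mu(\{A_{r,\mu}f \ge \alpha\}) = M$, while $\|f\|_{L^p(\mu)}^p = p-1$. The weak type $(p,p)$ bound of $1)$ then reads
\[
M \le \left(\frac{N\|f\|_{L^p(\mu)}}{\alpha}\right)^p = N^p\,(p-1)\left(\frac{p}{p-1}\right)^p = p^p(p-1)^{1-p}N^p,
\]
which contradicts $M = K+1 > p^p(p-1)^{1-p}N^p$. This establishes $2)$.

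The step I expect to be decisive is the calibration of the mass assigned to $y$. Carrying an arbitrary weight $w > 0$ there produces the bound $M \le N^p(w+1)^p w^{1-p}$, and a one-variable optimization shows that $(w+1)^p w^{1-p}$ is minimized at $w = p-1$, which is exactly what yields the sharp constant $p^p(p-1)^{1-p}$; a naive weight would only give a larger constant. The remaining matters are routine: the measure is automatically $\tau$-additive, locally finite, and finitely supported, so it falls within the scope of $1)$; and the open/closed dichotomy causes no trouble, since one simply uses balls of the kind occurring in $1)$ and appeals to the equality of the open and closed equal radius Besicovitch constants to transfer the bound.
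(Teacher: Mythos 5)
Your proposal is correct and takes essentially the same route as the paper: the implication $2)\Rightarrow 3)$ is, as you say, just Theorem \ref{discreteequiv}, and for $1)\Rightarrow 2)$ the paper uses the identical test data $\mu_c = c\,\delta_y + \sum_i \delta_{x_i}$ with $f = \mathbf{1}_{\{y\}}$, keeping the weight $c$ general and then maximizing $g(c) = c^{1/q}/(1+c)$ at $c = p-1$, which is precisely the calibration you identify and substitute from the outset. Your level-set computation, the verification that $A_{r,\mu}f(y) < (p-1)/p$, and the contradiction $K+1 > p^p(p-1)^{1-p}N^p$ all match the paper's argument.
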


\begin{proof} The implication   2) $\implies$ 3) is part of the preceding result.  Regarding
1) $\implies$ 2),  we show that
	if  $\mathcal{C}$  is an intersecting  Besicovitch family  in $(X,d)$, of
	cardinality  strictly larger than $\lfloor p^p (p - 1)^{(1-p)} N^p\rfloor$ and equal radius $r$, then there exists a finite
	sum of weighted Dirac deltas $\mu$, for which 
	$\|A_{r, \mu}\|_{L^p\to L^{p,\infty}} >
	N$. 
	
	Let $q = p/(p - 1)$ be the dual exponent of $p$, and let $J:= \lfloor p^p (p - 1)^{(1-p)} N^p \rfloor + 1$.	
	 We may suppose
	that $\mathcal{C} = \{B(x_1, r), \dots, B(x_{J}, r)\}$.
	Let $y \in \cap \mathcal{C}$, and set, for $ c > 0$, 
	$\mu_c :=  c \delta_y + \sum_{i= 1}^{J} \delta_{x_i}$.
	Recall that for every $\alpha > 0$, by definition of the weak $(p,p)$ constant $\|A_{r, \mu_c}\|_{L^p\to L^{p,\infty}}$,
	\begin{equation}\label{weaktypepM}
	\mu_c (\{A_{r, \mu_c} f \ge \alpha\}) \le \left(\frac{\|A_{r, \mu_c}\|_{L^p\to L^{p,\infty}} \|f\|_{L^p}}{\alpha}\right)^p.
	\end{equation}
	 Set
	$f = \mathbf{1}_{\{y\}}$; then $\|f\|_{L^p(\mu_c)} = c^{1/p}$. For $1 \le i \le J$, we have $A_{r, \mu_c} f (x_i) = c/(1 + c)$. 
	Thus,
	$\mu_c \{A_{r, \mu_c} f \ge  c/(1 + c)\} = J $,  so 
	with $\alpha = c/(1 + c)$, we have 
	\begin{equation}\label{weaktype}
	J^{1/p} 
	\le
	 \frac{\|A_{r, \mu_c}\|_{L^p\to L^{p,\infty}} (1 + c)}{c^{1/q}}.
	 	\end{equation}
	 Maximizing $g(c) = c^{1/q}/(1 + c)$ we get $c = p - 1$
	 and $g(p - 1) = (p - 1)^{(p - 1)/p} p^{-1}$,  so
	 \begin{equation}\label{weaktype1}
	N 
	=
	 \frac{(p -1)^{(p - 1)/p}  \left(p^p (p - 1)^{(1-p)} N^p\right)^{1/p} }{p} 
	< \frac{(p -1)^{(p - 1)/p}  \ J ^{1/p} }{p} 
	\le
	 \|A_{r, \mu_c}\|_{L^p\to  L^{p,\infty}}.
	 	\end{equation}
\end{proof}

\section{Consequences for $\mathbb{R}^d$}

In this section  we take balls to be closed. Unlike the case of the Heisenberg groups, where we have $E(X,d) < \infty$ and $L(X,d) = \infty$, in Banach spaces we always have $E(X,d)  = L(X,d)$.

\begin{theorem} \label{kiss}  If $(X, \|\cdot\|)$ is a
	Banach space, then  $E(X, \| \cdot\|) = L(X, \| \cdot\|)$. 
\end{theorem}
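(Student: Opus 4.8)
The plan is to prove the two inequalities $E(X,\|\cdot\|)\le L(X,\|\cdot\|)$ and $L(X,\|\cdot\|)\le E(X,\|\cdot\|)$ separately. The first is immediate and holds in any metric space, since every equal radius Besicovitch family is in particular a Besicovitch family, so the supremum defining $E$ ranges over a subcollection of the families entering the supremum defining $L$. The content of the theorem is therefore the reverse inequality $L\le E$, and this is where the linear structure of $X$ enters. I would first reduce to intersecting families: for a fixed Besicovitch family $\mathcal{C}$ and a point $y$, the sum $\sum_{B\in\mathcal{C}}\mathbf{1}_{B}(y)$ counts exactly the balls of $\mathcal{C}$ containing $y$, and these form an intersecting Besicovitch subfamily (the Besicovitch property is hereditary to subfamilies). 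Hence both $L$ and $E$ equal the supremum of the cardinalities of the corresponding intersecting Besicovitch families, and it suffices to transform an arbitrary intersecting Besicovitch family into an equal radius one of the same cardinality.

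So let $\{B^{cl}(x_i,r_i)\}_{i\in I}$ be an intersecting Besicovitch family of closed balls; using translation invariance of the norm I would place the common point at the origin. Membership of $0$ in each ball then reads $\|x_i\|\le r_i$, while the Besicovitch condition reads $\|x_i-x_j\|>\max\{r_i,r_j\}$ for $i\neq j$. I would fix a target radius $r>0$ and rescale each center radially toward the origin by setting $\tilde{x}_i:=(r/r_i)\,x_i$. Then $\|\tilde{x}_i\|=(r/r_i)\|x_i\|\le r$, so each new closed ball $B^{cl}(\tilde{x}_i,r)$ still contains the origin, and injectivity of $i\mapsto\tilde{x}_i$ (hence preservation of cardinality) will follow from the Besicovitch bound below. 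The heart of the matter is to check that the rescaled family remains Besicovitch, i.e.\ that $\|\tilde{x}_i-\tilde{x}_j\|>r$ for $i\neq j$.

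For this key step, assume without loss of generality $r_i\le r_j$, write $a:=r/r_i\ge b:=r/r_j$, and use the decomposition
\[
a x_i - b x_j = a(x_i-x_j) + (a-b)\,x_j ,
\]
so that the reverse triangle inequality gives $\|\tilde{x}_i-\tilde{x}_j\|\ge a\|x_i-x_j\|-(a-b)\|x_j\|$. Feeding in $\|x_i-x_j\|>r_j$ from the Besicovitch hypothesis and $\|x_j\|\le r_j$ from membership, the right-hand side exceeds $a r_j-(a-b)r_j=b r_j=r$, which is precisely the required strict inequality. I expect this verification to be the only substantive point, since it is exactly here that both the homogeneity of the norm (allowing different centers to be scaled by different factors) and the triangle inequality are essential; the degenerate possibility that some $x_i=0$ forces the whole family to be a singleton and is dispatched at once. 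Once this is established, $\{B^{cl}(\tilde{x}_i,r)\}_{i\in I}$ is an equal radius intersecting Besicovitch family of the same cardinality, which yields $L\le E$ and, combined with the trivial direction, completes the proof.
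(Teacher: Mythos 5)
Your proof is correct, and while it shares the paper's overall skeleton, the key step is genuinely different. Both arguments reduce to the same task -- translate the common point of an intersecting Besicovitch family to the origin and rescale each center radially to produce an equal radius intersecting family of the same cardinality -- but they carry out the rescaling and its verification differently. The paper scales each center by $1/\|x_i\|$, projecting the centers onto the unit sphere (after normalizing $r_y=\min_i\|x_i\|=1$), and verifies the separation $\left\|x_i/\|x_i\|-x_j/\|x_j\|\right\|>1$ by invoking Maligranda's lower bound for angular distances \cite{Ma}, fed with $\|x_i-x_j\|>r_i\ge\|x_i\|$. You instead scale by $r/r_i$, using the radii rather than the norms of the centers, and prove the separation $\|\tilde x_i-\tilde x_j\|>r$ from scratch via the decomposition $ax_i-bx_j=a(x_i-x_j)+(a-b)x_j$; I checked the estimate, and it is right: with $r_i\le r_j$, so $a\ge b>0$, the inputs $\|x_i-x_j\|>r_j$ and $\|x_j\|\le r_j$ give $a\|x_i-x_j\|-(a-b)\|x_j\|>ar_j-(a-b)r_j=br_j=r$, and the strictness is preserved because $a>0$. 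Both proofs consume exactly the same two hypotheses -- the Besicovitch separation $\|x_i-x_j\|>\max\{r_i,r_j\}$ and the membership condition $\|x_k\|\le r_k$ -- so the arguments are close cousins, but yours buys self-containedness (no external lemma), explicit treatment of the degenerate case $x_i=y$ (the paper's normalization $r_y=1$ tacitly assumes $r_y>0$, and as you observe that case forces a singleton family), and a free choice of target radius $r$; the paper's route buys the citation of a reusable angular-distance inequality and keeps the construction parallel to the one used in \cite{Al1} for the centered maximal operator. Your explicit reduction to intersecting families via heredity of the Besicovitch property is also a correct filling-in of a step the paper states without comment.
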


\begin{proof}  It suffices to show that 
$E(X, \| \cdot\|) \ge L(X, \| \cdot\|)$. Both $E (X, \| \cdot\|)$ and $L (X, \| \cdot\|)$ are defined as suprema, so it is enough to prove that given any finite, intersecting Besicovitch family 
$\mathcal{C} := \{B^{cl} (x_1, r_1),  \dots,  B^{cl} (x_n, r_n) \}$, we can produce an equal radius  intersecting 
Besicovitch family  of the same cardinality. Choose $y \in \cap \mathcal{C}$, 
	 and let $r_y := \min\{ \|x_1 - y\| ,  \dots,  \|x_n - y\|\}$.
By a translation and a dilation, we may assume that
$y = 0$ and $r_y = 1$. We claim that 
$\mathcal{C}^\prime := \{B^{cl} (x_1/\|x_1\|, 1),  \dots,  B^{cl} (x_n/\|x_n\|, 1) \}$ is a Besicovitch family. To show that any two vectors in
$ \{ x_1/\|x_1\|,  \dots,  x_n/\|x_n\|\}$  are at distance
$ > 1$,
we choose a pair of centers  $x_i$ and $x_j$ of balls from $\mathcal{C}$, with, say,  $\|x_i\| \ge  \|x_j\|$. 
Since $\|x_i - x_j\|  > \|x_i\|$, using the lower bound for the angular distances from   \cite[Corollary 1.2]{Ma}, we get
\begin{equation*}
\left\|\frac{x_i}{\|x_i\|}-\frac{x_j}{\|x_j\|}\right\|
\ge
\frac{\|x_i - x_j\|  - \left| \|x_i\| - \|x_j\|\right| }{\min\left\{\|x_i\|, \|x_j\|\right\} }
= \frac{\|x_i - x_j\|  - \|x_i\| + \|x_j\|}{ \|x_j\| } 
>
1.
\end{equation*}
 \end{proof}

As is the case with the maximal operator, cf. \cite[Theorem 3.3]{Al1}, in $\mathbb{R}^d$  it is possible to
construct a measure $\mu$ for which 
the supremum  is attained, with $r = 1$. We omit the proof.

\begin{theorem} \label{kiss}  Let $\|\cdot\|$ be any norm on 
 $\mathbb{R}^d$. Then there exists a discrete measure $\mu$ 
 such that $\|A^{cl}_{1,\mu}  \|_{L^1(\mu)\to L^1(\mu)} =  E(\mathbb{R}^d, \| \cdot\|)$.
\end{theorem}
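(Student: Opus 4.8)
The plan is to reduce everything to the duality identity $\|A^{cl}_{1,\mu}\|_{L^1\to L^1} = \|a^{cl}_1\|_{L^\infty(\mu)}$ of \cite[Theorem 3.3]{Al2}, together with the a priori bound $\|a^{cl}_1\|_{L^\infty(\mu)}\le E$ furnished by the implication 1)$\implies$2) of Theorem \ref{discreteequiv}. Writing $E := E(\mathbb{R}^d, \|\cdot\|)$, it therefore suffices to construct a discrete measure $\mu$ with $\|a^{cl}_1\|_{L^\infty(\mu)} = E$. Since finite-dimensional normed spaces are geometrically doubling, $E$ is finite, and as it is a supremum of integers it is attained; applying a dilation we may fix an intersecting, equal radius closed-ball Besicovitch family of radius $1$, with centers $x_1,\dots,x_E$ satisfying $\|x_i - x_j\| > 1$ for $i\ne j$, and a common point $y_0$ with $\|x_i - y_0\| \le 1$ for every $i$.

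First I would examine a single near-extremal configuration. For $c>0$ put $\nu_c := c\,\delta_{y_0} + \sum_{i=1}^{E}\delta_{x_i}$. Then $\nu_c B^{cl}(y_0,1) = c+E$, while the Besicovitch condition keeps the remaining centers out of $B^{cl}(x_i,1)$, so $\nu_c B^{cl}(x_i,1) = 1+c$. Hence
\[
a^{cl}_1(y_0) \;=\; \frac{c}{c+E} \;+\; \frac{E}{1+c} \;\longrightarrow\; E \qquad (c\to 0^{+}).
\]
One cannot simply set $c=0$: for the $E$ terms at $y_0$ to equal $1$ apiece one would need $B^{cl}(x_i,1)$ to carry no mass besides $\delta_{x_i}$, which is incompatible with an atom at $y_0\in B^{cl}(x_i,1)$. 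Thus no finite configuration attains $E$ pointwise, and the value $E$ can only be reached in the limit.

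The way around this is superposition. Pick $c_n\downarrow 0$ so that the configuration $\nu_{c_n}$ has $a^{cl}_1(y_0)\ge E - 1/n$, and pick translation vectors $v_n$ (for instance $v_n := 20n\,e_1$) so widely separated that, for $m\ne n$, the radius-$1$ ball about any atom of the translated copy $\mu_n := c_n\delta_{v_n+y_0} + \sum_{i=1}^{E}\delta_{v_n+x_i}$ contains no atom of $\mu_m$. Set $\mu := \sum_{n\ge 1}\mu_n$. This $\mu$ is discrete; it is locally finite because any bounded set meets only finitely many copies; and, as $\mathbb{R}^d$ is separable, it is $\tau$-additive. By the choice of separation, computing $a^{cl}_1$ at the atom $v_n+y_0$ involves only the atoms of $\mu_n$ and hence returns the isolated value, so $a^{cl}_1(v_n+y_0)\ge E-1/n$.

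Each point $v_n+y_0$ is an atom of positive $\mu$-measure, so these inequalities force $\|a^{cl}_1\|_{L^\infty(\mu)}\ge E$; combined with the universal bound $\|a^{cl}_1\|_{L^\infty(\mu)}\le E$ this gives $\|a^{cl}_1\|_{L^\infty(\mu)}=E$, and the duality identity then yields $\|A^{cl}_{1,\mu}\|_{L^1\to L^1}=E$. The main obstacle, and the reason this does not follow at once from Theorem \ref{discreteequiv}, is exactly the impossibility of pointwise attainment recorded above: the equality $E$ must be produced as the essential supremum of a sequence of mutually non-interacting near-extremal configurations, which is what compels the use of a measure with infinitely many atoms and the careful separation of the translated copies.
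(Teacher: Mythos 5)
Your proof is correct: the paper itself omits the proof of this theorem, deferring to the analogous construction for the centered maximal operator in \cite[Theorem 3.3]{Al1}, and your superposition of widely separated, increasingly near-extremal configurations $c_n\delta_{y_0}+\sum_{i=1}^{E}\delta_{x_i}$ with $c_n\downarrow 0$, evaluated through the conjugate function $a^{cl}_1$ and capped by the upper bound from Theorem \ref{discreteequiv} plus the duality identity of \cite[Theorem 3.3]{Al2}, is exactly that construction adapted to the averaging operators. One cosmetic point: for an arbitrary norm the sample choice $v_n = 20n\,e_1$ may fail to separate the copies if $\|e_1\|$ is small, but your governing requirement that the $v_n$ be chosen sufficiently separated (e.g.\ $v_n = 20n\,e_1/\|e_1\|$) renders this immaterial, and likewise your side remark that no finite configuration attains $E$ pointwise is only verified for your specific family, but it carries no weight in the argument.
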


The equality  $E(X, \| \cdot\|) = L(X, \| \cdot\|)$
allows one to transfer uniform bounds known for the centered maximal operator to uniform bounds for the averaging operators. 

In one dimension it is obvious that $E(X, \| \cdot\|) = 2$.
This observation extends to arbitrary measures on the real line  the upper bound 2 that appears in 
Theorem 4.2 for  the standard exponential
	distribution (given by $d P(t) = \mathbf{1}_{(0,\infty)} (t) \ e^{-t} dt$).

In higher dimensions, from Corollaries 3.4, 3.5 and 3.6 of \cite{Al1} we obtain the following

\begin{corollary} \label{infinitybounds} Given any norm $
	\|\cdot\|
	$ on the plane,  if the unit ball is a parallelogram then   $\sup_{r, \mu}\|A_{r,\mu}  \|_{L^1(\mu)\to L^1(\mu)} = 4
	$, while 
$\sup_{r, \mu}\|A_{r,\mu}  \|_{L^1(\mu)\to L^1(\mu)} =  5$ in every other case.

With balls defined using the $\ell_\infty$ norm,  the sharp uniform bound for $\sup_{r, \mu}\|A_{r,\mu}  \|_{L^1(\mu)\to L^1(\mu)}$ on $(\mathbb{R}^d , \|\cdot\|_\infty)$ 
	is $2^d$. Furthermore, the bound is attained.
	
	For the euclidean norm we have  
$\sup_{r, \mu}\|A_{r,\mu}  \|_{L^1(\mu)\to L^1(\mu)} =   12$ in dimension 3, and the bound is attained. Asymptotically, in dimension $d$ the following bounds hold:
\begin{equation}\label{asym}
	(1 + o(1)) \sqrt{\frac{3 \pi}{8}}  \log {\frac{3}{2 \sqrt 2}}
\ 	d^{3/2} \  \left(\frac{2}{\sqrt{3}}\right)^d
\le
\sup_{r, \mu}\|A_{r,\mu}  \|_{L^1(\mu)\to L^1(\mu)} \le
2^{0.401 (1 + o(1)) d}.
\end{equation}
\end{corollary}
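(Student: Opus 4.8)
The plan is to reduce the entire statement to the twin identities
\begin{equation*}
\sup_{r,\mu}\|A_{r,\mu}\|_{L^1(\mu)\to L^1(\mu)} = E(\mathbb{R}^d, \|\cdot\|) = L(\mathbb{R}^d, \|\cdot\|),
\end{equation*}
where the first equality is exactly the content of the Remark following Theorem \ref{discreteequiv}, and the second is the Banach-space identity $E = L$ established above (valid since every finite-dimensional normed space is a Banach space). Once these are in place, the corollary is no longer about averaging operators at all: it merely restates the values and bounds for the Besicovitch constant $L(\mathbb{R}^d, \|\cdot\|)$ that were computed for the centered maximal operator in \cite[Corollaries 3.4, 3.5 and 3.6]{Al1}, since $\sup_\mu\|M_\mu\|_{L^1\to L^{1,\infty}} = L(\mathbb{R}^d,\|\cdot\|)$ by Theorem \ref{Borelequiv}. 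Throughout I take balls to be closed, as fixed at the start of the section.

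I would then dispatch the cases by direct transcription. For the plane, \cite[Corollary 3.4]{Al1} gives $L = 4$ when the unit ball is a parallelogram and $L = 5$ in every other case, which yields the first sentence of the corollary. For the $\ell_\infty$ norm on $\mathbb{R}^d$, the relevant part of \cite[Corollary 3.5]{Al1} gives $L = 2^d$. For the Euclidean norm, \cite[Corollary 3.6]{Al1} supplies the value $L = 12$ in dimension three together with the asymptotic lower and upper bounds recorded in \eqref{asym}. In each case the chain of equalities above converts the statement about $L$ into the claimed statement about $\sup_{r,\mu}\|A_{r,\mu}\|_{L^1\to L^1}$.

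Attainment --- the assertion that the suprema are in fact maxima in the $\ell_\infty$ and dimension-three Euclidean cases (and indeed for every norm) --- is immediate from the attainment theorem just stated, which produces, for any norm on $\mathbb{R}^d$, a discrete measure $\mu$ with $\|A^{cl}_{1,\mu}\|_{L^1(\mu)\to L^1(\mu)} = E(\mathbb{R}^d,\|\cdot\|)$. I do not expect any genuine obstacle here: all of the geometric content resides in \cite{Al1} and in the earlier theorems of this paper, so the corollary is essentially a bookkeeping assembly. The one point requiring care is consistency of the open/closed-ball conventions, so that the transference is exact; this is guaranteed by the Proposition at the start of Section 2, which shows that the open-ball and closed-ball versions of the equal radius Besicovitch constant coincide, and by the fact that Theorem \ref{discreteequiv} holds with the same constant $E$ for both kinds of balls.
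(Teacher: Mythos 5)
Your proposal is correct and follows essentially the same route as the paper: the corollary is obtained by transference, combining $\sup_{r,\mu}\|A_{r,\mu}\|_{L^1\to L^1}=E(\mathbb{R}^d,\|\cdot\|)$ (Theorem \ref{discreteequiv} and the remark after it) with $E=L$ for Banach spaces (Theorem \ref{kiss}), then reading off the values of $L$ from Corollaries 3.4--3.6 of \cite{Al1}, with attainment supplied by the theorem producing a discrete measure $\mu$ with $\|A^{cl}_{1,\mu}\|_{L^1(\mu)\to L^1(\mu)}=E(\mathbb{R}^d,\|\cdot\|)$. Your additional care about the open/closed-ball conventions is consistent with the paper's setup (closed balls throughout the section) and introduces no discrepancy.
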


\begin{remark} For $\mathbb{R}^d$ with the euclidean norm and
	the standard gaussian measure $\gamma$, it was shown in \cite[Theorem 4.3]{Al3}
that 
$\sup_{r>0}\|A_r\|_{L^1(\gamma)\to L^1(\gamma)} \le (2 + \varepsilon)^d$, whenever  $\varepsilon > 0$ and $d$ is large enough.  
The  upper bounds from the preceding result
(valid for all measures) represent a substantial improvement.
\end{remark}

\end{document}